\newtheorem{theorem}{Theorem}[section]
\newtheorem{lemma}[theorem]{Lemma}
\newtheorem{corollary}[theorem]{Corollary}
\theoremstyle{definition}
\theoremstyle{remark}
\newtheorem{remark}[theorem]{Remark}
\numberwithin{equation}{section}
\begin{document}
\title[A Diaz--Metcalf type inequality for positive linear maps]{A Diaz--Metcalf type inequality for positive linear maps and its applications}
\author[M.S. Moslehian, R. Nakamoto, Y. Seo]{Mohammad Sal Moslehian$^1$, Ritsuo Nakamoto$^2$ and Yuki Seo$^3$}

\address{$^1$ Department of Pure Mathematics, Centre of Excellence in Analysis on
Algebraic Structures (CEAAS), Ferdowsi University of Mashhad, P.O. Box 1159,
Mashhad 91775, Iran.}
\email{moslehian@ferdowsi.um.ac.ir and moslehian@ams.org}

\address{$^2$ Faculty of Engineering, Ibaraki University, Hitachi, Ibaraki 316-0033, Japan.}
\email{r-naka@net1.jway.ne.jp}

\address{$^3$ Faculty of Engineering, Shibaura Institute of Technology, 307 Fukasaku, Minuma-ku, Saitama-city, Saitama 337-8570, Japan.}
\email{yukis@sic.shibaura-it.ac.jp}

\subjclass[2010]{Primary 46L08; Secondary 26D15, 46L05, 47A30, 47A63.}
\keywords{Diaz--Metcalf type inequality, reverse Cauchy--Schwarz inequality, positive map, Ozeki--Izumino--Mori--Seo inequality, operator inequality.}

\begin{abstract}
We present a Diaz--Metcalf type operator inequality as a reverse Cauchy--Schwarz inequality and then apply it to get the operator versions of P\'{o}lya--Szeg\"{o}'s, Greub--Rheinboldt's, Kantorovich's, Shisha--Mond's, Schweitzer's, Cassels' and Klamkin--McLenaghan's inequalities via a unified approach. We also give some operator Gr\"uss type inequalities and an operator Ozeki--Izumino--Mori--Seo type inequality. Several applications are concluded as well.
\end{abstract}

\maketitle

%------------------------------------------------------------------------------------%

\section{Introduction}

The Cauchy--Schwarz inequality plays an essential role in mathematical inequalities and its applications. In a semi-inner product space $(\mathscr{H}, \langle \cdot,\cdot\rangle)$ the Cauchy--Schwarz inequality reads as follows
\begin{equation*}
|\langle x,y\rangle|\leq \langle x,x\rangle^{1/2} \langle y,y\rangle^{1/2} \qquad (x,y \in \mathscr{H}).
\end{equation*}
There are interesting generalizations of the Cauchy--Schwarz inequality in various frameworks, e.g. finite sums, integrals, isotone functionals, inner product spaces, $C^*$-algebras and Hilbert $C^*$-modules; see \cite{DRA1, DRA2, DRA3, JOI, MP, NIC, F-H-P-S} and references therein. There are several reverses of the Cauchy--Schwarz  inequality in the literature: Diaz--Metcalf's, P\'{o}lya--Szeg\"{o}'s, Greub--Rheinboldt's, Kantorovich's, Shisha--Mond's, Ozeki--Izumino--Mori--Seo's, Schweitzer's, Cassels' and Klamkin--McLenaghan's inequalities.

Inspired by the work of J.B. Diaz and F.T. Metcalf \cite{DM}, we present several reverse Cauchy--Schwarz type inequalities for positive linear maps.  We give a unified treatment of some reverse inequalities of the classical Cauchy--Schwarz type for positive linear maps.

Throughout the paper $\mathbb{B}(\mathscr{H})$ stands for the algebra of all bounded linear operators acting on a Hilbert space $\mathscr{H}$. We simply denote by $\alpha$ the scalar multiple $\alpha I$ of the identity operator $I\in \mathbb{B}(\mathscr{H})$. For self-adjoint operators $A, B$ the partially ordered relation $B \leq A$ means
that $\langle B\xi,\xi\rangle\leq \langle A\xi,\xi\rangle$ for all $\xi\in\mathscr{H}$. In particular, if $0 \leq
A$, then $A$ is called positive. If $A$ is a positive invertible operator, then we write $0<A$. A linear map $\Phi: {\mathscr A} \to {\mathscr B}$ between $C^*$-algebras is said to be positive if $\Phi(A)$ is positive whenever so is $A$. We say that $\Phi$ is unital if $\Phi$ preserves the identity. The reader is referred to \cite{F-H-P-S} for undefined notations and terminologies.

%------------------------------------------------------------------------------------%

\section{Operator Diaz--Metcalf type inequality}

We start this section with our main result. Recall that the geometric operator mean $A\ \sharp  \ B$ for positive operators $A, B \in \mathbb{B}(\mathscr{H})$ is defined by
  \[
  A\ \sharp  \ B = A^{\frac{1}{2}}\left( A^{-\frac{1}{2}}BA^{-\frac{1}{2}}\right)^{\frac{1}{2}}A^{\frac{1}{2}}\,
  \]
if $0<A$.
\begin{theorem}
Let $A, B \in \mathbb{B}(\mathscr{H})$ be positive invertible operators and $\Phi: \mathbb{B}(\mathscr{H}) \to \mathbb{B}(\mathscr{K})$ be a positive linear map.

\quad {\rm (i)} If $m^2A \leq B \leq M^2A$ for some positive real numbers $m<M$, then the following inequalities hold:
\begin{itemize}
\item Operator Diaz--Metcalf inequality of first type
\begin{eqnarray*}
Mm\Phi(A)+\Phi(B) \leq (M+m)\Phi(A\sharp B)\,;
\end{eqnarray*}

\item Operator Cassels inequality
\begin{eqnarray*}
\Phi(A)\sharp \Phi(B) \leq \frac{M+m}{2\sqrt{Mm}}\Phi(A\sharp B)\,;
\end{eqnarray*}

\item Operator Klamkin--McLenaghan inequality
\begin{eqnarray*}
\Phi(A\sharp B)^{\frac{-1}{2}}\Phi(B)\Phi(A\sharp B)^{\frac{-1}{2}}- \Phi(A\sharp B)^{\frac{1}{2}}\Phi(A)^{-1}\Phi(A\sharp B)^{\frac{1}{2}}\leq (\sqrt{M}-\sqrt{m})^2\,;
\end{eqnarray*}
\item Operator Kantorovich inequality
\begin{eqnarray*}
\Phi(A)\sharp \Phi(A^{-1}) \leq \frac{M^2+m^2}{2Mm}\,.
\end{eqnarray*}
\end{itemize}

\quad {\rm (ii)} If $m_1^2 \leq A \leq M_1^2$ and $m_2^2\leq B\leq M_2^2$  for some positive real numbers $m_1<M_1$ and $m_2 <M_2$, then the following inequalities hold:
\begin{itemize}
\item Operator Diaz--Metcalf inequality of second type
\begin{eqnarray*}
\frac{M_2m_2}{M_1m_1}\Phi(A)+\Phi(B) \leq \left(\frac{M_2}{m_1}+\frac{m_2}{M_1}\right)\Phi(A\sharp B)\,;
\end{eqnarray*}

\item Operator P\'{o}lya--Szeg\"{o} inequality
\begin{eqnarray*}
\Phi(A)\sharp \Phi(B) \leq \frac{1}{2}\left(\sqrt{\frac{M_1M_2}{m_1m_2}}+\sqrt{\frac{m_1m_2}{M_1M_2}}\right)\Phi(A\sharp B)\,;
\end{eqnarray*}

\item Operator Shisha--Mond inequality
\begin{eqnarray*}
\Phi(A\sharp B)^{\frac{-1}{2}}\Phi(B)\Phi(A\sharp B)^{\frac{-1}{2}}- \Phi(A\sharp B)^{\frac{1}{2}}\Phi(A)^{-1}\Phi(A\sharp B)^{\frac{1}{2}}
\leq \left(\sqrt{\frac{M_2}{m_1}}-\sqrt{\frac{m_2}{M_1}}\right)^2\,;
\end{eqnarray*}
\item Operator Gr\"uss type inequality
\begin{eqnarray*}
\Phi(A)\sharp \Phi(B)- \Phi(A\sharp B)\leq \frac{\sqrt{M_1M_2}\left(\sqrt{M_1M_2}-\sqrt{m_1m_2}\right)^2}{2\sqrt{m_1m_2}}\min\left\{\frac{M_1}{m_1}, \frac{M_2}{m_2}\right\}\,.
\end{eqnarray*}
\end{itemize}
\end{theorem}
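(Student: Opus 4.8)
The plan is to derive the whole theorem from a single pre-map operator inequality --- the operator Diaz--Metcalf inequality of first type --- obtaining the remaining estimates of (i) by operator-algebraic manipulation and deducing (ii) by reduction to (i). First I would prove, \emph{before} applying $\Phi$, that $m^2A\le B\le M^2A$ forces $Mm\,A+B\le(M+m)\,A\sharp B$. Conjugating the hypothesis by $A^{-1/2}$ gives $m^2\le T\le M^2$ for $T:=A^{-1/2}BA^{-1/2}$, hence $m\le T^{1/2}\le M$ by operator monotonicity of the square root. Since $M-T^{1/2}$ and $T^{1/2}-m$ are commuting positive operators, their product is positive, so $T+Mm\le(M+m)T^{1/2}$. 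Conjugating back by $A^{1/2}$ and using $A^{1/2}TA^{1/2}=B$ together with the defining identity $A^{1/2}T^{1/2}A^{1/2}=A\sharp B$ yields the asserted inequality; applying the positive linear map $\Phi$ then gives the first displayed inequality of (i).

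The other three items of (i) follow formally. For Cassels I would combine the arithmetic--geometric mean inequality $X\sharp Y\le\tfrac12(X+Y)$ with the homogeneity $(Mm\,\Phi(A))\sharp\Phi(B)=\sqrt{Mm}\,\Phi(A)\sharp\Phi(B)$ to get $2\sqrt{Mm}\,\Phi(A)\sharp\Phi(B)\le Mm\,\Phi(A)+\Phi(B)$, and then insert the Diaz--Metcalf bound on the right. For Klamkin--McLenaghan I would conjugate the Diaz--Metcalf inequality by $G^{-1/2}$, where $G:=\Phi(A\sharp B)$, and set $U:=G^{-1/2}\Phi(A)G^{-1/2}$, noting $G^{1/2}\Phi(A)^{-1}G^{1/2}=U^{-1}$; the scalar estimate $Mm\,t+t^{-1}\ge 2\sqrt{Mm}$, read through the functional calculus of $U$, then collapses the right-hand side to $(M+m-2\sqrt{Mm})I=(\sqrt M-\sqrt m)^2I$. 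The Kantorovich inequality is the specialization $B=A^{-1}$: since $A\sharp A^{-1}=I$, applying Cassels with the squared bounds to the pair $(A,A^{-1})$ and using that $\Phi$ is unital yields the scalar $\tfrac{M^2+m^2}{2\sqrt{M^2m^2}}=\tfrac{M^2+m^2}{2Mm}$.

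For (ii) I would reduce to (i) by sandwiching. From $m_1^2\le A\le M_1^2$ and $m_2^2\le B\le M_2^2$ one gets $(m_2/M_1)^2A\le B\le(M_2/m_1)^2A$; taking $m=m_2/M_1$ and $M=M_2/m_1$ in (i) reproduces the Diaz--Metcalf inequality of second type, and then the Pólya--Szegő and Shisha--Mond inequalities drop out of the Cassels and Klamkin--McLenaghan inequalities after substituting these constants (one checks that $\tfrac{M+m}{2\sqrt{Mm}}$ and $(\sqrt M-\sqrt m)^2$ become exactly the claimed expressions). For the Gr\"uss-type inequality I would start from Pólya--Szegő, rewrite the gap as $\Phi(A)\sharp\Phi(B)-\Phi(A\sharp B)\le(C-1)\,\Phi(A\sharp B)$ with $C=\tfrac{M_1M_2+m_1m_2}{2\sqrt{M_1M_2m_1m_2}}$, and then bound $\Phi(A\sharp B)$ above by a scalar (with $\Phi$ unital) using monotonicity of the geometric mean, $A\sharp B\le M_1^2\sharp M_2^2=M_1M_2$.

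I expect the main obstacle to be the pre-map step of the first paragraph --- justifying the operator monotonicity of the square root, the positivity of the commuting product $(M-T^{1/2})(T^{1/2}-m)$, and the geometric-mean identity $A^{1/2}T^{1/2}A^{1/2}=A\sharp B$ --- since once this is in place every other estimate is algebraic rearrangement, order preservation under $\Phi$, or the scalar reduction $m=m_2/M_1,\ M=M_2/m_1$. The secondary delicate point is the bookkeeping of the Gr\"uss constant: the clean estimate above produces $(C-1)M_1M_2=\tfrac{\sqrt{M_1M_2}(\sqrt{M_1M_2}-\sqrt{m_1m_2})^2}{2\sqrt{m_1m_2}}$, so the extra factor $\min\{M_1/m_1,M_2/m_2\}\ge 1$ in the stated bound must be reconciled, the stated inequality following a fortiori once $\Phi(A\sharp B)$ is controlled by the appropriate condition number.
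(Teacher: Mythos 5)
Your proposal is correct, and its core is exactly the paper's argument: the pre-map inequality $MmA+B\le(M+m)A\sharp B$ obtained from $(M-T^{1/2})(T^{1/2}-m)\ge 0$ with $T=A^{-1/2}BA^{-1/2}$, then $\Phi$, then the arithmetic--geometric mean trick $(Mm\Phi(A))\sharp\Phi(B)=\sqrt{Mm}\,\Phi(A)\sharp\Phi(B)\le\tfrac12(Mm\Phi(A)+\Phi(B))$ for Cassels, conjugation by $\Phi(A\sharp B)^{-1/2}$ for Klamkin--McLenaghan (your functional-calculus phrasing $Mmt+t^{-1}\ge2\sqrt{Mm}$ applied to $U$ is literally the paper's completed square $\bigl(\sqrt{Mm}\,P^{1/2}-P^{-1/2}\bigr)^2\ge0$, since the paper's two conjugated quantities are mutual inverses), and case (ii) reduced to case (i) via $m=m_2/M_1$, $M=M_2/m_1$.

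You diverge from the paper in two places, both legitimately. For Kantorovich you specialize Cassels to the pair $(A,A^{-1})$, whereas the paper plugs $B=A^{-1}$ into the P\'olya--Szeg\"o form; these are the same computation in different clothing and yield the same constant $\tfrac{M^2+m^2}{2Mm}$ (both, like yours, need $\Phi$ unital, an assumption the theorem statement omits but the paper's proof invokes). For the Gr\"uss item your route is genuinely different and in fact better: you bound $A\sharp B\le M_1^2\sharp M_2^2=M_1M_2$ by joint monotonicity of the geometric mean, which gives the gap bound $\tfrac{\sqrt{M_1M_2}(\sqrt{M_1M_2}-\sqrt{m_1m_2})^2}{2\sqrt{m_1m_2}}$ with no extra factor; the paper instead uses the cruder bound $A\sharp B\le M_1^2M_2/m_1$ plus a symmetry argument, which is precisely where its factor $\min\{M_1/m_1,M_2/m_2\}\ge1$ comes from. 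So your estimate implies the stated one a fortiori, as you observe --- your reconciliation is the right one, and your version is sharper. (Both your proof and the paper's tacitly need $\Phi(I)\le I$ to pass from $A\sharp B\le c\,I$ to $\Phi(A\sharp B)\le c\,I$ in this step; you at least flag the normalization explicitly.)
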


%------------------------------------
\begin{proof}
\quad (i) If $m^2A \leq B \leq M^2A$ for some positive real numbers $m<M$, then $m^2 \leq A^{\frac{-1}{2}}BA^{\frac{-1}{2}}\leq M^2$.

\quad(ii) If $m_1^2 \leq A \leq M_1^2$ and $m_2^2\leq B\leq M_2^2$  for some positive real numbers $m_1<M_1$ and $m_2<M_2$, then
\begin{eqnarray}\label{0}
m^2=\frac{m_2^2}{M_1^2} \leq A^{\frac{-1}{2}}BA^{\frac{-1}{2}} \leq \frac{M_2^2}{m_1^2}=M^2\,.
\end{eqnarray}
In any case we then have
$$\left(M-\left(A^{\frac{-1}{2}}BA^{\frac{-1}{2}}\right)^{1/2}\right)\left(\left(A^{\frac{-1}{2}}BA^{\frac{-1}{2}}\right)^{1/2}-m\right)\geq 0\,,$$
whence
$$Mm+A^{\frac{-1}{2}}BA^{\frac{-1}{2}} \leq (M+m)\left(A^{\frac{-1}{2}}BA^{\frac{-1}{2}}\right)^{\frac{1}{2}}\,.$$
Hence
\begin{eqnarray}\label{2.1}
MmA+B \leq (M+m)A^{1/2}\left(A^{\frac{-1}{2}}BA^{\frac{-1}{2}}\right)^{\frac{1}{2}}A^{1/2}=(M+m)A\sharp B\,.
\end{eqnarray}
Since $\Phi$ is a positive linear map, \eqref{2.1} yields the \textit{operator Diaz--Metcalf inequality of first type} as follows:
\begin{eqnarray}\label{2.2}
Mm\Phi(A)+\Phi(B) \leq (M+m)\Phi(A\sharp B)\,.
\end{eqnarray}
In the case when (ii) holds we get the following, which is called the \textit{operator Diaz--Metcalf inequality of second type}:
\begin{eqnarray*}
\frac{M_2m_2}{M_1m_1}\Phi(A)+\Phi(B) \leq \left(\frac{M_2}{m_1}+\frac{m_2}{M_1}\right)\Phi(A\sharp B)\,.
\end{eqnarray*}
Following the strategy of \cite{NIE}, we apply the operator geometric--arithmetic inequality to $Mm\Phi(A)$ and $\Phi(B)$ to get:
\begin{eqnarray}\label{2.4}
\sqrt{Mm}(\Phi(A)\sharp \Phi(B))&=&(Mm\Phi(A)) \sharp \Phi(B)\leq \frac{1}{2}\left(Mm\Phi(A)+ \Phi(B)\right).
\end{eqnarray}
It follows from \eqref{2.2} and \eqref{2.4} that
\begin{eqnarray*}
\Phi(A)\sharp \Phi(B) \leq \frac{M+m}{2\sqrt{Mm}}\Phi(A\sharp B)\,,
\end{eqnarray*}
which is said to be the \textit{operator Cassels inequality} under the assumption (i); see also \cite{LEE}. Under the case (ii) we can represent it as the following inequality being called the \textit{operator P\'olya--Szeg\"o inequality} or the \textit{operator Greub--Rheinboldt inequality}:
\begin{eqnarray}\label{2.6}
\Phi(A)\sharp \Phi(B) \leq \frac{1}{2}\left(\sqrt{\frac{M_1M_2}{m_1m_2}}+\sqrt{\frac{m_1m_2}{M_1M_2}}\right)\Phi(A\sharp B)\,.
\end{eqnarray}
It follows from \eqref{2.6} that
\begin{eqnarray}\label{001}
\Phi(A)\sharp \Phi(B)- \Phi(A\sharp B)&\leq& \left( \frac{1}{2}\left(\sqrt{\frac{M_1M_2}{m_1m_2}}+\sqrt{\frac{m_1m_2}{M_1M_2}}\right)-1\right)\Phi(A\sharp B)\nonumber\\
&=& \frac{\left(\sqrt{M_1M_2}-\sqrt{m_1m_2}\right)^2}{2\sqrt{m_1m_2}\sqrt{M_1M_2}}\Phi(A\sharp B)\,.
\end{eqnarray}
It follows from \eqref{0} that
\begin{eqnarray*}
\frac{m_2}{M_1}A \leq A^{1/2}\left(A^{\frac{-1}{2}}BA^{\frac{-1}{2}}\right)^{1/2}A^{1/2} \leq \frac{M_2}{m_1}A\,,
\end{eqnarray*}
so
\begin{eqnarray}\label{002}
\frac{m_1^2m_2}{M_1} \leq A\sharp B \leq \frac{M_1^2M_2}{m_1}\,.
\end{eqnarray}
Now, \eqref{001} and \eqref{002} yield that
\begin{eqnarray*}
\Phi(A)\sharp \Phi(B)- \Phi(A\sharp B)\leq \frac{\left(\sqrt{M_1M_2}-\sqrt{m_1m_2}\right)^2}{2\sqrt{m_1m_2}\sqrt{M_1M_2}} \frac{M_1^2M_2}{m_1}\,.
\end{eqnarray*}
An easy symmetric argument then follows that
\begin{eqnarray*}
\Phi(A)\sharp \Phi(B)- \Phi(A\sharp B)\leq \frac{\sqrt{M_1M_2}\left(\sqrt{M_1M_2}-\sqrt{m_1m_2}\right)^2}{2\sqrt{m_1m_2}}\min\left\{\frac{M_1}{m_1}, \frac{M_2}{m_2}\right\}\,,
\end{eqnarray*}
presenting a Gr\"uss type inequality.

If $A$ is invertible and $\Phi$ is unital and $m_1^2=m^2\leq A \leq M^2=M_1^2$, then by putting $m_2^2=1/M^2\leq B = A^{-1}\leq 1/m^2=M_2^2$ in \eqref{2.6} we get the following \textit{operator Kantorovich inequality}:
\begin{eqnarray}\label{2.7}
\Phi(A)\sharp \Phi(A^{-1}) \leq \frac{M^2+m^2}{2Mm}\,.
\end{eqnarray}
It follows from \eqref{2.2} that
\begin{eqnarray}\label{2.8}
&&\Phi(A\sharp B)^{\frac{-1}{2}}\Phi(B)\Phi(A\sharp B)^{\frac{-1}{2}}- \Phi(A\sharp B)^{\frac{1}{2}}\Phi(A)^{-1}\Phi(A\sharp B)^{\frac{1}{2}}\nonumber\\
&\leq& M+m-Mm\Phi(A\sharp B)^{\frac{-1}{2}}\Phi(A)\Phi(A\sharp B)^{\frac{-1}{2}}- \Phi(A\sharp B)^{\frac{1}{2}}\Phi(A)^{-1}\Phi(A\sharp B)^{\frac{1}{2}}\nonumber\\
&\leq& M+m-2\sqrt{Mm}-\Big(\sqrt{Mm}\left(\Phi(A\sharp B)^{\frac{-1}{2}}\Phi(A)\Phi(A\sharp B)^{\frac{-1}{2}}\right)^{1/2}\nonumber\\
&&-\left(\Phi(A\sharp B)^{\frac{1}{2}}\Phi(A)^{-1}\Phi(A\sharp B)^{\frac{1}{2}}\right)^{1/2}\Big)^2\nonumber\\
&\leq& (\sqrt{M}-\sqrt{m})^2\,,
\end{eqnarray}
that is, an \textit{operator Klakmin--Mclenaghan inequality} when (i) holds. Under (ii), we get the following \textit{operator Shisha--Szeg\"o inequality} from \eqref{2.8}:
\begin{eqnarray*}
\Phi(A\sharp B)^{\frac{-1}{2}}\Phi(B)\Phi(A\sharp B)^{\frac{-1}{2}}- \Phi(A\sharp B)^{\frac{1}{2}}\Phi(A)^{-1}\Phi(A\sharp B)^{\frac{1}{2}}
\leq \left(\sqrt{\frac{M_2}{m_1}}-\sqrt{\frac{m_2}{M_1}}\right)^2\,.
\end{eqnarray*}
\end{proof}
%------------------------------------------------------------------------------------%

\section{Applications}
If $(a_1, \ldots, a_n)$ and $(b_1, \ldots, b_n)$ are  $n$-tuples of real
numbers with $0< m_1 \leq a_i \leq M_1\,\,\,(1\leq i\leq n), 0< m_2 \leq b_i \leq M_2\,\,(1\leq i\leq n)$, we can consider the positive linear map $\Phi(T)=\langle Tx,x\rangle$ on $\mathbb{B}(\mathbb{C}^n)=M_n(\mathbb{C})$ and let $A={\rm diag}(a_1^2, \ldots, a_n^2)$, $B={\rm diag}(b_1^2, \ldots, b_n^2)$ and $x=(1, \ldots, 1)^t$ in the operator inequalities above to get the following classical inequalities:
\begin{itemize}
\item Diaz--Metcalf inequality \cite{DM}
\begin{eqnarray*}
\sum_{k=1}^{n}b_{k}^{2}+\frac{m_{2}M_{2}}{m_{1}M_{1}}\sum_{k=1}^{n}a_{k}^{2}
\leq \left( \frac{M_{2}}{m_{1}}+\frac{m_{2}}{M_{1}}\right)
\sum_{k=1}^{n}a_{k}b_{k}\,.
\end{eqnarray*}

\item P\'{o}lya--Szeg\"{o} inequality \cite{PS}
\begin{eqnarray*}
\frac{\sum_{k=1}^{n}a_{k}^{2}\sum_{k=1}^{n}b_{k}^{2}}{\left(
\sum_{k=1}^{n}a_{k}b_{k}\right) ^{2}}\leq \frac{1}{4}\left( \sqrt{\frac{
M_{1}M_{2}}{m_{1}m_{2}}}+\sqrt{\frac{m_{1}m_{2}}{M_{1}M_{2}}}\right) ^{2}\,;
\end{eqnarray*}

\item Shisha--Mond inequality \cite{SM}
\begin{eqnarray*}
\frac{\sum_{k=1}^{n}a_{k}^{2}}{\sum_{k=1}^{n}a_{k}b_{k}}-\frac{
\sum_{k=1}^{n}a_{k}b_{k}}{\sum_{k=1}^{n}b_{k}^{2}}\leq \left( \sqrt{\frac{
M_{1}}{m_{2}}}-\sqrt{\frac{m_{1}}{M_{2}}}\right) ^{2}\,;
\end{eqnarray*}

\item A Gr\"uss type inequality
\begin{eqnarray*}
\left(\sum_{k=1}^{n}a_{k}^{2}\right)^{1/2}\left(\sum_{k=1}^{n}b_{k}^{2}\right)^{1/2}-\sum_{k=1}^{n}a_{k}b_{k}\quad\qquad\qquad\qquad\qquad\qquad\qquad\qquad\qquad\qquad\\
\qquad\qquad\qquad\leq \frac{\sqrt{M_1M_2}\left(\sqrt{M_1M_2}-\sqrt{m_1m_2}\right)^2}{2\sqrt{m_1m_2}}\min\left\{\frac{M_1}{m_1}, \frac{M_2}{m_2}\right\}\,.
\end{eqnarray*}
\end{itemize}
Using the same argument with a positive $n$-tuple $(a_1, \ldots, a_n)$ of real
numbers with $0< m \leq a_i \leq M\,\,(1 \leq i \leq n)$, $x=\frac{1}{\sqrt{n}}(1, \ldots, 1)^t$, we get from Kantorovich inequality that
\begin{itemize}
\item Schweitzer inequality \cite{BUL}
\begin{eqnarray*}
\left(\frac{1}{n}\sum_{i=1}^na_i^2\right)\left(\frac{1}{n}\sum_{i=1}^n a_i^{-2}\right) \leq \frac{(M^2+m^2)^2}{4M^2m^2}\,.
\end{eqnarray*}
\end{itemize}

If $(a_1, \ldots, a_n)$ and $(b_1, \ldots, b_n)$ are $n$-tuples of real
numbers with $0< m \leq a_i/b_i \leq M\,\,\,(1\leq i\leq n)$, we can consider the positive linear map $\Phi(T)=\langle Tx,x\rangle$ on $\mathbb{B}(\mathbb{C}^n)=M_n(\mathbb{C})$ and let $A={\rm diag}(a_1^2, \ldots, a_n^2)$, $B={\rm diag}(b_1^2, \ldots, b_n^2)$ and $x=(\sqrt{w_1}, \ldots, \sqrt{w_n})^t$ based on the weight $\mathbf{\bar{w}}=\left( w_{1},\dots ,w_{n}\right)$, in the operator inequalities above to get the following classical inequalities:

\begin{itemize}
\item Cassels inequality \cite{WAT}
\begin{eqnarray*}
\frac{\sum_{k=1}^{n}w_{k}a_{k}^{2}\sum_{k=1}^{n}w_{k}b_{k}^{2}}{\left(
\sum_{k=1}^{n}w_{k}a_{k}b_{k}\right) ^{2}}\leq \frac{\left( M+m\right) ^{2}}{
4mM}\,;
\end{eqnarray*}

\item Klamkin--McLenaghan inequality \cite{K-M}

\begin{eqnarray*}
\sum_{k=1}^{n}w_{k}a_{k}^{2}\sum_{k=1}^{n}w_{k}b_{k}^{2}-\left(
\sum_{k=1}^{n}w_{k}a_{k}b_{k}\right) ^{2}\leq \left( \sqrt{M}-\sqrt{m}
\right) ^{2}\sum_{k=1}^{n}w_{k}a_{k}b_{k}\sum_{k=1}^{n}w_{k}a_{k}^{2}.
\end{eqnarray*}
\end{itemize}

Using the same argument, we obtain a weighted form of the P\'{o}lya--Szeg\"{o} inequality as follows:
\begin{itemize}
\item Grueb--Rheinboldt inequality \cite{G-R}
\begin{eqnarray*}
\frac{\sum_{k=1}^{n}w_{k}a_{k}^{2}\sum_{k=1}^{n}w_{k}b_{k}^{2}}{\left(
\sum_{k=1}^{n}w_{k}a_{k}b_{k}\right) ^{2}}\leq \frac{\left(
M_{1}M_{2}+m_{1}m_{2}\right) ^{2}}{4m_{1}m_{2}M_{1}M_{2}}.
\end{eqnarray*}
\end{itemize}

%------------------------------------------------------------------------------------%

One can assert the integral versions of discrete results above by considering $L^2(X, \mu)$, where $(X, \mu)$ is a probability space, as a Hilbert space via $\langle h_1,h_2\rangle=\int_X h_1\overline{h_2} d\mu$, multiplication operators $A, B \in \mathbb{B}(L^2(X, \mu)))$ defined by $A(h)=f^2h$ and $B(h)=g^2h$ for bounded $f, g \in L^2(X, \mu)$ and a positive linear map $\Phi$ by $\Phi(T)=\int_XT(1)d\mu$ on $\mathbb{B}(L^2(X, \mu)))$. For instance, let us state integral versions of the Cassels and Klamkin--McLenaghan inequalities. These two inequalities are obtained, first for bounded positive functions $f, g \in L^2(X, \mu)$ and next for general positive functions $f, g \in L^2(X, \mu)$ as the limits of sequences of bounded positive functions.
\begin{corollary}
Let $(X, \mu)$ be a probability space and $f, g \in
L^2(X,\mu)$ with $ 0 \leq mg \leq f \leq Mg$ for some scalars $0<m<M$.
Then
\begin{eqnarray*}
\int_X f^2 d\mu \int_X g^2 d\mu \leq  \frac{(M+m)^2}{4Mm} \left(\int_X fg d\mu\right)^2
\end{eqnarray*}
and
\begin{eqnarray*}
\int_X f^2 d\mu \int_X g^2 d\mu -\left(\int_X fg d\mu\right)^2 \leq  \left( \sqrt{M}-\sqrt{m}\right)^2 \int_X fg d\mu \int_X f^2 d\mu\,.
\end{eqnarray*}
\end{corollary}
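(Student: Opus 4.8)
The plan is to obtain the Corollary as the scalar specialization of the operator Cassels and operator Klamkin--McLenaghan inequalities of part~(i) of the Theorem, applied to commuting multiplication operators. On the Hilbert space $L^2(X,\mu)$ let $\mathbf 1$ denote the constant function $1$; since $\mu$ is a probability measure, $\|\mathbf 1\|^2=\int_X 1\,d\mu=1$, so the prescribed map $\Phi(T)=\int_X T(1)\,d\mu=\langle T\mathbf 1,\mathbf 1\rangle$ is a unital positive linear functional (a state) on $\mathbb B(L^2(X,\mu))$. Taking $A=M_{g^2}$ and $B=M_{f^2}$ to be multiplication by $g^2$ and $f^2$, the hypothesis $0\le mg\le f\le Mg$ squares to $m^2g^2\le f^2\le M^2g^2$, which is precisely the operator relation $m^2A\le B\le M^2A$ demanded in case~(i). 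One then has $\Phi(A)=\int_X g^2\,d\mu$ and $\Phi(B)=\int_X f^2\,d\mu$.

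The key computation is the geometric mean. Because $A$ and $B$ are commuting positive multiplication operators, the functional calculus gives $A\sharp B=M_{\sqrt{g^2f^2}}=M_{fg}$ (using $f,g\ge 0$), whence $\Phi(A\sharp B)=\int_X fg\,d\mu$. As $\Phi$ is scalar-valued, the operator geometric mean $\Phi(A)\sharp\Phi(B)$ is simply $\big(\int_X f^2\,\int_X g^2\big)^{1/2}$, and the sandwiched terms appearing in the operator Klamkin--McLenaghan inequality collapse to the scalar quotients $\Phi(B)/\Phi(A\sharp B)$ and $\Phi(A\sharp B)/\Phi(A)$. Substituting these values into the operator Cassels inequality and squaring yields the first displayed inequality, while substituting into the operator Klamkin--McLenaghan inequality and clearing denominators yields the second; here one must keep careful track of which operator plays the role of $A$ and of the parameters entering the constant (note that the Cassels constant $\tfrac{(M+m)^2}{4Mm}$ is invariant under the replacement $(m,M)\mapsto(1/M,1/m)$, which is the substitution relevant if instead one sets $A=M_{f^2}$, $B=M_{g^2}$).

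The main obstacle is that the Theorem requires $A$ and $B$ to be \emph{positive invertible}, equivalently that $f$ and $g$ be essentially bounded both above and away from $0$; for such functions $M_{f^2},M_{g^2}$ are invertible in $\mathbb B(L^2(X,\mu))$ and the argument above applies verbatim. To reach arbitrary positive $f,g\in L^2(X,\mu)$ one argues by approximation, as indicated in the text preceding the statement. Concretely, on $E_n=\{1/n\le g\le n\}$ leave $f,g$ unchanged, and off $E_n$ replace them by admissible constants (for instance $g=1$ and $f=\tfrac{M+m}{2}$, which still satisfy $mg\le f\le Mg$); the resulting $f_n,g_n$ are bounded and bounded away from $0$, so the inequalities hold for them. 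Since $f,g\in L^2$ and $\mu(X)=1$ we have $f^2,g^2,fg\in L^1$, and $f_n\to f$, $g_n\to g$ a.e.\ with an integrable dominating function, so dominated convergence lets all three integrals pass to the limit and the inequalities survive. The delicate part is precisely this truncation: one must arrange it so as to preserve simultaneously the ratio constraint $mg_n\le f_n\le Mg_n$ and the two-sided bound guaranteeing invertibility, and then justify the interchange of limit and integral.
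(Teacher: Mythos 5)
Your construction is exactly the paper's: multiplication operators on $L^2(X,\mu)$, the state $\Phi(T)=\langle T\mathbf{1},\mathbf{1}\rangle=\int_X T(1)\,d\mu$, the identification $M_{g^2}\,\sharp\,M_{f^2}=M_{fg}$ for commuting multiplications, and a truncation argument to pass from bounded to general $f,g$; your derivation of the first (Cassels) inequality along these lines is sound. The genuine gap is in the second inequality. With your assignment $A=M_{g^2}$, $B=M_{f^2}$ (the one matching part (i) of the Theorem with the constants $m,M$), the operator Klamkin--McLenaghan inequality collapses to
\[
\frac{\int_X f^2\,d\mu}{\int_X fg\,d\mu}-\frac{\int_X fg\,d\mu}{\int_X g^2\,d\mu}\le\bigl(\sqrt{M}-\sqrt{m}\bigr)^2,
\]
and clearing denominators means multiplying by $\int_X fg\,d\mu\int_X g^2\,d\mu$: the factor you obtain on the right is $\int_X g^2\,d\mu$, not the $\int_X f^2\,d\mu$ of the statement. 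The reversed assignment $A=M_{f^2}$, $B=M_{g^2}$ forces the constants $(1/M,1/m)$, and the Klamkin--McLenaghan constant, unlike the Cassels one, is not invariant under this swap: it becomes $(\sqrt{M}-\sqrt{m})^2/(Mm)$, so that route gives $\frac{(\sqrt{M}-\sqrt{m})^2}{Mm}\int_X fg\,d\mu\int_X f^2\,d\mu$ on the right. Neither bookkeeping yields the printed inequality, and none can, because the printed inequality is false: on the two-point probability space with $\mu(\{x_1\})=\mu(\{x_2\})=\tfrac{1}{2}$, $f\equiv 1$, $g(x_1)=1$, $g(x_2)=100$, $m=\tfrac{1}{100}$, $M=1$, the left-hand side equals $5000.5-2550.25=2450.25$ while the claimed right-hand side equals $(0.9)^2\cdot 50.5\cdot 1\approx 40.9$. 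So the step ``clearing denominators yields the second'' is exactly where your proof breaks; what your method actually proves is the corrected version with $\int_X g^2\,d\mu$ as the last factor (the corollary, like the paper's discrete Klamkin--McLenaghan inequality, carries this misprint --- precisely the role-tracking issue your own remark flags but does not carry out).

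A secondary defect lies in the truncation. Off $E_n=\{1/n\le g\le n\}$ you set $g_n=1$, $f_n=\tfrac{M+m}{2}$; but up to a null set $X\setminus\bigcup_n E_n=\{g=0\}$, where the hypothesis forces $f=0$, and there $(f_n,g_n)\equiv(\tfrac{M+m}{2},1)$ for every $n$. If $\mu(\{g=0\})>0$, your approximants do not converge to $(f,g)$, and the limiting inequality retains non-vanishing extra terms on both sides that cannot simply be discarded. The repair is to let the filler decay: off $E_n$ take $g_n=1/n$ and $f_n=\tfrac{M+m}{2n}$; these still satisfy $mg_n\le f_n\le Mg_n$, are bounded and bounded away from zero, converge a.e.\ to $f$ and $g$ (including on $\{g=0\}$), and are dominated by $f+\tfrac{M+m}{2}$ and $g+1$ respectively, so dominated convergence completes the limit passage for both inequalities.
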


%------------------------------------------------------------------------------------%

Considering the positive linear functional $\Phi(R)=\sum_{i=1}^n\langle R\xi_i,\xi_i\rangle$ on $\mathbb{B}(\mathscr{H})$, where $\xi_1, \ldots, \xi_n \in \mathscr{H}$, we get the following versions of the Diaz--Metcalf and P\'{o}lya--Szeg\"{o} inequalities in a Hilbert space.

%------------------------------------------------------------------------------------%

\begin{corollary}
Let $\mathscr{H}$ be a Hilbert space, let $\xi_1, \ldots, \xi_n \in \mathscr{H}$ and let $T, S \in \mathbb{B}(\mathscr{H})$ be positive operators satisfying $0<m_1\leq T\leq M_1$ and $0<m_2\leq S \leq M_2$. Then
\begin{eqnarray*}
\frac{M_2m_2}{M_1m_1}\sum_{i=1}^n\|T\xi_i\|^2+\sum_{i=1}^n\|S\xi_i\|^2 \leq \left(\frac{M_2}{m_1}+\frac{m_2}{M_1}\right)\sum_{i=1}^n\| (T^2\sharp S^2)^{1/2}\xi_i\|^2
\end{eqnarray*}
and
\begin{eqnarray*}
&&\left(\sum_{i=1}^n\|T\xi_i\|^2\right)^{1/2}\left(\sum_{i=1}^n\|S\xi_i\|^2\right)^{1/2}\\ &&\qquad\qquad\qquad\qquad\qquad\qquad\leq \frac{1}{2}\left(\sqrt{\frac{M_1M_2}{m_1m_2}}+\sqrt{\frac{m_1m_2}{M_1M_2}}\right)\sum_{i=1}^n\| (T^2\sharp S^2)^{1/2}\xi_i\|^2\,.
\end{eqnarray*}
\end{corollary}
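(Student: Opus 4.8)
The plan is to derive both inequalities as a direct specialization of part (ii) of the main Theorem. I would take $\mathscr{K}=\mathbb{C}$ and let $\Phi:\mathbb{B}(\mathscr{H})\to\mathbb{C}$ be the functional $\Phi(R)=\sum_{i=1}^n\langle R\xi_i,\xi_i\rangle$. This $\Phi$ is linear, and it is positive because for $R\geq 0$ each summand $\langle R\xi_i,\xi_i\rangle$ is nonnegative; hence $\Phi$ is a positive linear map and the Theorem applies. The substitution is $A=T^2$ and $B=S^2$.

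First I would verify that the hypotheses of part (ii) hold. Since $T$ is a single positive operator with $m_1\leq T\leq M_1$, its spectrum lies in $[m_1,M_1]$, and applying the continuous functional calculus with $t\mapsto t^2$ gives $m_1^2\leq T^2\leq M_1^2$, that is, $m_1^2\leq A\leq M_1^2$; the same reasoning yields $m_2^2\leq B\leq M_2^2$. It is worth emphasising that no operator monotonicity of $t\mapsto t^2$ is required here, because we are merely bounding the spectrum of one operator rather than comparing two distinct operators.

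Next I would evaluate the three scalars that appear in the abstract inequalities. Using $T=T^*$ one gets $\Phi(A)=\sum_i\langle T^2\xi_i,\xi_i\rangle=\sum_i\|T\xi_i\|^2$ and likewise $\Phi(B)=\sum_i\|S\xi_i\|^2$. Since $T^2\sharp S^2$ is positive we have $\langle(T^2\sharp S^2)\xi_i,\xi_i\rangle=\|(T^2\sharp S^2)^{1/2}\xi_i\|^2$, so $\Phi(A\sharp B)=\sum_i\|(T^2\sharp S^2)^{1/2}\xi_i\|^2$. Finally, because $\Phi$ is scalar-valued, the operator geometric mean degenerates to the ordinary one, $\Phi(A)\sharp\Phi(B)=\sqrt{\Phi(A)\Phi(B)}=\big(\sum_i\|T\xi_i\|^2\big)^{1/2}\big(\sum_i\|S\xi_i\|^2\big)^{1/2}$.

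With these identifications in hand, inserting $\Phi(A),\Phi(B),\Phi(A\sharp B)$ into the operator Diaz--Metcalf inequality of second type reproduces the first displayed inequality verbatim, while inserting them into the operator P\'olya--Szeg\"o inequality yields the second. The argument is essentially a matter of bookkeeping; the only points requiring any care are the spectral-calculus step for the squaring and the recognition that the operator mean collapses to the scalar geometric mean for a functional, so I expect no genuine obstacle beyond these translations.
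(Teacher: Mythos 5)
Your proposal is correct and follows exactly the paper's route: the paper likewise obtains this corollary by applying part (ii) of the main theorem to the positive linear functional $\Phi(R)=\sum_{i=1}^n\langle R\xi_i,\xi_i\rangle$ with $A=T^2$, $B=S^2$, so that $\Phi(A)\sharp\Phi(B)$ reduces to the scalar geometric mean. Your write-up merely spells out the bookkeeping (spectral bounds for $T^2$, $S^2$ and the identifications of $\Phi(A)$, $\Phi(B)$, $\Phi(A\sharp B)$) that the paper leaves implicit.
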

%------------------------------------------------------------------------------------%

\par
\bigskip

%-------------------------------------

\section{A Gr\"uss type inequality}

%-------------------------------------

In this section we obtain another Gr\"uss type inequality, see also \cite{m-r}. Let $\mathscr{A}$ be a $C^{*}$-algebra and let $\mathscr{B}$ be a $C^{*}$-subalgebra of $\mathscr{A}$. Following \cite{A-B-M}, a positive linear map $\Phi : \mathscr{A} \to \mathscr{B}$ is called a left multiplier if $\Phi(XY)=\Phi(X)Y$ for every $X\in \mathscr{A}$, $Y\in \mathscr{B}$.  \par

The following lemma is interesting on its own right.

\begin{lemma}\label{lem1}
Let $\Phi$ be a unital positive linear map on $\mathscr{A}$, $A \in \mathscr{A}$ and
$M, m$ be complex numbers such that
\begin{eqnarray}\label{1.0}
{\rm Re}\left((M-A)^*(A-m)\right)\geq 0\,.
\end{eqnarray}
Then
\begin{eqnarray}\label{2.0}
\Phi(|A|^2) - \left|\Phi(A)\right|^2  \leq \frac{1}{4}|M-m|^2\,.
\end{eqnarray}
\end{lemma}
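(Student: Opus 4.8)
The plan is to reduce everything to a centering argument. The first observation is that the quantity $\Phi(|A|^2)-|\Phi(A)|^2$ is invariant under the translation $A\mapsto A-\lambda$ for any scalar $\lambda$. Indeed, since $\Phi$ is positive it is $*$-preserving, so that $\Phi(A^*)=\Phi(A)^*$, and since it is unital we have $\Phi(\lambda)=\lambda$. Expanding $\Phi(|A-\lambda|^2)$ and $|\Phi(A-\lambda)|^2$ and using these two facts, every cross term cancels and one is left exactly with $\Phi(|A|^2)-|\Phi(A)|^2$. I would therefore choose the natural center $\lambda=\frac{M+m}{2}$ and set $W:=A-\frac{M+m}{2}$, so that it suffices to bound $\Phi(|W|^2)-|\Phi(W)|^2$.

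Next I would rewrite the hypothesis \eqref{1.0} in terms of $W$. Putting $C=\frac{M+m}{2}$ and $r=\frac{M-m}{2}$, one has $M-A=r-W$ and $A-m=W+r$, so that $(M-A)^*(A-m)=(\bar r-W^*)(W+r)$. A short expansion, using that $r$ and $C$ are scalars, gives $\mathrm{Re}\big((M-A)^*(A-m)\big)=|r|^2-W^*W$. Hence the hypothesis \eqref{1.0} is precisely the operator inequality $W^*W\le |r|^2=\frac{1}{4}|M-m|^2$.

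From here the conclusion is immediate. Applying the positive unital map $\Phi$ to $W^*W\le \frac14|M-m|^2$ yields $\Phi(|W|^2)=\Phi(W^*W)\le \frac14|M-m|^2\,\Phi(I)=\frac14|M-m|^2$. Since $|\Phi(W)|^2=\Phi(W)^*\Phi(W)\ge 0$, discarding this nonnegative term gives
\[
\Phi(|A|^2)-|\Phi(A)|^2=\Phi(|W|^2)-|\Phi(W)|^2\le \Phi(|W|^2)\le \tfrac14|M-m|^2,
\]
which is \eqref{2.0}.

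The computation presents no genuine obstacle; the only points requiring care are the two places where the hypotheses on $\Phi$ enter. The translation invariance of the variance uses exactly unitality together with the identity $\Phi(A^*)=\Phi(A)^*$ for positive maps, while the final estimate uses unitality once more. It is worth emphasizing that this approach deliberately avoids invoking any Kadison--Schwarz type inequality $|\Phi(A)|^2\le\Phi(|A|^2)$ --- which may fail for a merely positive (as opposed to $2$-positive) map --- by simply dropping the manifestly nonnegative term $|\Phi(W)|^2$.
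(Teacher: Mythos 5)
Your proof is correct and follows essentially the same route as the paper: the paper also uses translation invariance of $\Phi(|A|^2)-|\Phi(A)|^2$, recasts hypothesis \eqref{1.0} as the operator bound $\left|A-\frac{M+m}{2}\right|^2\le\frac14|M-m|^2$ (your identity $\mathrm{Re}\left((M-A)^*(A-m)\right)=|r|^2-W^*W$ is exactly this), applies the unital positive map $\Phi$, and drops the nonnegative term $|\Phi(A-c)|^2$. The only difference is cosmetic: you center first and then translate the hypothesis, while the paper states the centering identity directly; your write-up even fixes a small typo in the paper's displayed identity, which writes $\mathrm{Re}\left((M-T)(T-m)^*\right)$ where $\mathrm{Re}\left((M-T)^*(T-m)\right)$ is what matches both the hypothesis and the computation.
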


\begin{proof}
 For any complex number $c\in {\Bbb C}$, we have
 \begin{equation} \label{eq:Gr-1}
 \Phi(|A|^2)-|\Phi(A)|^2 = \Phi(|A-c|^2)-|\Phi(A-c)|^2.
 \end{equation}
 Since for any $T\in \mathscr{A}$ the operator equality
\[
\frac{1}{4}|M-m|^2-\left|T-\frac{M+m}{2}\right|^2=\mbox{Re}\left((M-T)(T-m)^*\right)
\]
holds, the condition \eqref{1.0} implies that
\begin{equation} \label{eq:Gr-2}
\Phi\left(\left|A-\frac{M+m}{2}\right|^2\right) \leq \frac{1}{4}|M-m|^2\,.
\end{equation}
Therefore, it follows from \eqref{eq:Gr-1} and \eqref{eq:Gr-2} that
\begin{align*}
\Phi(|A|^2)-|\Phi(A)|^2 & \leq \Phi(|A-\frac{M+m}{2}|^2) \\
& \leq \frac{1}{4}|M-m|^2.
\end{align*}
\end{proof}
%------------------------------------------------------------------------------------%

%------------------------------------------------------------------------------------%
\begin{remark}
If (i) $\Phi$ is a unital positive linear map and $A$ is a normal operator or (ii) $\Phi$ is a $2$-positive linear map and $A$ is an arbitrary operator, then it follows from \cite{CHO} that
\begin{eqnarray}\label{med}
0 \leq \Phi(|A|^2) - \left|\Phi(A)\right|^2 \,.
\end{eqnarray}
Condition \eqref{med} is stronger than positivity and weaker than $2$-positivity; see \cite{E-L}. Another class of positive linear maps satisfying \eqref{med} are left multipliers, cf. \cite[Corollary 2.4]{A-B-M}.
\end{remark}

%------------------------------------------------------------------------------------%
\begin{lemma}\label{lem2}
Let a positive linear map $\Phi: \mathscr{A} \to \mathscr{B}$ be a unital left multiplier. Then
\begin{eqnarray}\label{3.0}
\left|\Phi(A^*B)-\Phi(A)^*\Phi(B)\right|^2\leq \left\|\Phi(|A|^2)-|\Phi(A)|^2\right\|\,\left( \Phi(|B|^2)-|\Phi(B)|^2\right)
\end{eqnarray}
\end{lemma}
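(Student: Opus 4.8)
The plan is to read both sides of the asserted inequality off the $\mathscr{B}$-valued ``covariance form''
\[
[A,B] := \Phi(A^*B) - \Phi(A)^*\Phi(B), \qquad A,B \in \mathscr{A},
\]
so that the claim becomes the operator Cauchy--Schwarz inequality $|[A,B]|^2 \le \|[A,A]\|\,[B,B]$, where $[A,A] = \Phi(|A|^2) - |\Phi(A)|^2$ and $[B,B] = \Phi(|B|^2) - |\Phi(B)|^2$. First I would record the structural properties of this form. It is sesquilinear (conjugate-linear in the first slot, linear in the second) and satisfies $[A,B]^* = [B,A]$, since $\Phi$ is linear and $*$-preserving. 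Crucially, it is positive on the diagonal, $[A,A] = \Phi(|A|^2) - |\Phi(A)|^2 \ge 0$ for every $A \in \mathscr{A}$: this is exactly the content of the Remark in the case of a left multiplier. Thus the task reduces to running the usual Cauchy--Schwarz argument for a positive sesquilinear form, but carried out at the operator level rather than the scalar level.

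The key extra ingredient, and the step where I expect the real work to lie, is a module property forced by the left multiplier hypothesis: for every $Y \in \mathscr{B}$ one has
\[
[A,BY] = [A,B]\,Y, \qquad [AY,B] = Y^*[A,B], \qquad [AY,AY] = Y^*[A,A]\,Y.
\]
The first identity follows directly by writing $\Phi((A^*B)Y) = \Phi(A^*B)Y$ and $\Phi(BY) = \Phi(B)Y$ (both instances of $\Phi(XZ)=\Phi(X)Z$ with $X \in \mathscr{A}$, $Z \in \mathscr{B}$) and cancelling. The subtlety to watch is that the left multiplier relation only extracts factors of $\mathscr{B}$ on the \emph{right}, so the identity for $[AY,B]$ cannot be proved by pulling $Y^*$ out on the left; instead I would obtain it by taking the adjoint of the identity $[B,AY] = [B,A]Y$ and using $[B,A]^* = [A,B]$. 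The third identity is then $[AY,AY] = Y^*[A,AY] = Y^*[A,A]Y$.

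With these identities in hand, I would expand, for an arbitrary $Y \in \mathscr{B}$,
\[
0 \le [AY - B,\, AY - B] = Y^*[A,A]Y - Y^*[A,B] - [A,B]^*Y + [B,B],
\]
the positivity again being the diagonal positivity applied to the element $AY - B \in \mathscr{A}$. Next I would dominate the first term by a scalar multiple of the unit: setting $\beta := \|[A,A]\| + \varepsilon$ for $\varepsilon > 0$ and using $[A,A] \le \beta\, I$, the displayed inequality yields $0 \le \beta\, Y^*Y - Y^*[A,B] - [A,B]^*Y + [B,B]$ for every $Y \in \mathscr{B}$. Substituting the specific choice $Y = \beta^{-1}[A,B] \in \mathscr{B}$, the three terms carrying $[A,B]$ collapse and leave $\beta^{-1}|[A,B]|^2 \le [B,B]$, that is $|[A,B]|^2 \le \beta\,[B,B]$. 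Letting $\varepsilon \to 0^+$ gives $|[A,B]|^2 \le \|[A,A]\|\,[B,B]$, which is the assertion. The role of the parameter $\varepsilon$ is precisely to let the single substitution cover the degenerate case $\|[A,A]\| = 0$ without invoking any invertibility of $[A,A]$; this bookkeeping, rather than any deep estimate, is the main point to handle with care.
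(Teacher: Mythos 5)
Your proof is correct and follows essentially the same route as the paper: both treat the defect form $[X,Y]=\Phi(X^{*}Y)-\Phi(X)^{*}\Phi(Y)$ as a $\mathscr{B}$-valued semi-inner product making $\mathscr{A}$ a pre-inner product $C^{*}$-module over $\mathscr{B}$, and conclude by the module Cauchy--Schwarz inequality $|[A,B]|^{2}\leq \|[A,A]\|\,[B,B]$. The only difference is one of self-containedness: the paper cites \cite[Corollary 2.4]{A-B-M} for the module structure and \cite[Proposition 1.1]{LAN} for the Cauchy--Schwarz inequality, whereas you verify the $\mathscr{B}$-module identities directly from the left multiplier property and reprove Lance's inequality inline (your $\varepsilon$-perturbation to handle $\|[A,A]\|=0$ is exactly the standard argument).
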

\begin{proof}
If we put $[X,Y]:=\Phi(X^{*}Y)-\Phi(X)^*\Phi(Y)$, then $\mathscr{A}$ is a right pre-inner product $C^{*}$-module over $\mathscr{B}$, since $\Phi(X^{*}Y)$ is a right pre-inner product $\mathscr{B}$-module, see \cite[Corollary 2.4]{A-B-M}. It follows from the Cauchy--Schwarz inequality in pre-inner product $C^{*}$-modules (see \cite[Proposition 1.1]{LAN}) that
\begin{align*}
\left|\Phi(A^*B)-\Phi(A)^*\Phi(B)\right|^2& = [B,A][A,B]\\
& \leq \| [A,A]\| [B,B] \\
& = \| \Phi(A^{*}A)-\Phi(A)^{*}\Phi(A) \| \left( \Phi(B^{*}B)-\Phi(B)^{*}\Phi(B) \right)
\end{align*}
and hence \eqref{3.0} holds.
\end{proof}
%------------------------------------------------------------------------------------%
\begin{theorem}
Let a positive linear map $\Phi: \mathscr{A} \to \mathscr{B}$ be a unital left multiplier. If $M_1, m_1, M_2, m_2 \in {\Bbb C}$ and $A,B\in \mathscr{A}$ satisfy the following conditions:
\[
{\rm Re}(M_1-A)^*(A-m_1)\geq 0 \quad \mbox{and} \quad {\rm Re}(M_2-B)^*(B-m_2)\geq 0,
\]
then
\begin{eqnarray*}
\left| \Phi(A^*B)-\Phi(A)^* \Phi(B)\right| \leq \frac{1}{4} |M_1-m_1| \, |M_2-m_2|.
\end{eqnarray*}
\end{theorem}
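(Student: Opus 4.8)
The plan is to combine the two lemmas that immediately precede the statement. Lemma \ref{lem2} bounds the quantity $\left|\Phi(A^*B)-\Phi(A)^*\Phi(B)\right|^2$ in terms of the two ``variances'' $\Phi(|A|^2)-|\Phi(A)|^2$ and $\Phi(|B|^2)-|\Phi(B)|^2$, while Lemma \ref{lem1} bounds each such variance by one quarter of the square of the corresponding diameter $|M-m|$. Feeding the latter estimates into the former and taking square roots should yield the claimed inequality directly, with no further machinery needed.

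First I would invoke Lemma \ref{lem2}, which applies because $\Phi$ is assumed to be a unital left multiplier, to obtain
\begin{equation*}
\left|\Phi(A^*B)-\Phi(A)^*\Phi(B)\right|^2\leq \left\|\Phi(|A|^2)-|\Phi(A)|^2\right\|\,\left(\Phi(|B|^2)-|\Phi(B)|^2\right).
\end{equation*}
Since a unital left multiplier is in particular a unital positive linear map, and since the hypothesis ${\rm Re}\left((M_1-A)^*(A-m_1)\right)\geq 0$ is exactly condition \eqref{1.0} of Lemma \ref{lem1} for the data $(A,M_1,m_1)$, that lemma gives the operator inequality $\Phi(|A|^2)-|\Phi(A)|^2\leq \frac{1}{4}|M_1-m_1|^2$, and likewise $\Phi(|B|^2)-|\Phi(B)|^2\leq \frac{1}{4}|M_2-m_2|^2$ for the data $(B,M_2,m_2)$.

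The one point requiring care is converting the operator inequality for $A$ into the \emph{norm} bound $\left\|\Phi(|A|^2)-|\Phi(A)|^2\right\|\leq \frac{1}{4}|M_1-m_1|^2$, since an upper estimate $X\leq c$ alone does not control $\|X\|$ without a matching lower bound. That lower bound is supplied by the Remark following Lemma \ref{lem1}, which records that for a left multiplier one always has $0\leq \Phi(|A|^2)-|\Phi(A)|^2$. Thus $\Phi(|A|^2)-|\Phi(A)|^2$ is a positive operator sandwiched between $0$ and the scalar $\frac{1}{4}|M_1-m_1|^2$, so its norm is at most $\frac{1}{4}|M_1-m_1|^2$. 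Substituting this norm bound together with the operator bound for $B$ into the inequality from Lemma \ref{lem2} gives $\left|\Phi(A^*B)-\Phi(A)^*\Phi(B)\right|^2\leq \frac{1}{16}|M_1-m_1|^2|M_2-m_2|^2$, and taking square roots completes the argument. I expect this passage from the operator inequality to the norm estimate, resting on the positivity property of left multipliers from the Remark, to be the only genuine obstacle; everything else is a mechanical assembly of the two lemmas.
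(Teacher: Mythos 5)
Your proof is correct and follows essentially the same route as the paper: both combine Lemma \ref{lem2} with Lemma \ref{lem1}, the only cosmetic difference being that the paper takes square roots first (via the L\"owner--Heinz theorem) while you substitute the squared bounds and extract the root at the end. In fact you are more careful than the paper on the one delicate point---justifying the norm bound $\left\|\Phi(|A|^2)-|\Phi(A)|^2\right\|\leq \frac{1}{4}|M_1-m_1|^2$ via the positivity $0\leq \Phi(|A|^2)-|\Phi(A)|^2$ recorded in the Remark for left multipliers---a step the paper's two-line proof silently elides.
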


\begin{proof}
By L\"{o}wner--Heinz theorem, we have
\begin{eqnarray*}
&&\left| \Phi(A^*B)-\Phi(A)^*\Phi(B) \right| \\
&\leq& \left\|\Phi(|A|^2)-|\Phi(A)|^2\right\|^{\frac{1}{2}} \, \left( \Phi(|B|^2)-|\Phi(B)|^2 \right)^{\frac{1}{2}} \qquad\quad\qquad\quad (\mbox{by~Lemma}~\ref{lem2})  \\
&\leq& \frac{1}{4} |M_1-m_1|\, |M_2-m_2| \qquad\qquad\qquad\qquad\qquad\qquad\qquad\quad (\mbox{by~Lemma}~\ref{lem1})\,.
\end{eqnarray*}
\end{proof}

%------------------------------------------------

\section{Ozeki--Izumino--Mori--Seo type inequality}

%------------------------------------------------

 Let $a=(a_1, \ldots , a_n)$ and $b=(b_1, \ldots , b_n)$ be $n$-tuples of real numbers satisfying
 \[
 0\leq m_1\leq a_i\leq M_1 \quad \mbox{and} \quad 0\leq m_2\leq b_i\leq M_2 \quad (i=1, \ldots , n).
 \]

 Then Ozeki--Izumino--Mori--Seo inequality \cite{IMS, Ozeki} asserts that
  \begin{equation} \label{eq:OI}
  \sum_{i=1}^n a_i^2 \sum_{i=1}^n b_i^2 - \left( \sum_{i=1}^n a_ib_i \right)^2 \leq \frac{n^2}{3} \left( M_1M_2-m_1m_2\right)^2.
  \end{equation}

In \cite{IMS} they also showed the following operator version of \eqref{eq:OI}: If $A$ and $B$ are positive operators in $\mathbb{B}(\mathscr{H})$ such that $0<m_1\leq A\leq M_1$ and $0<m_2\leq B\leq M_2$ for some scalars $m_1\leq M_1$ and $m_2\leq M_2$, then
\begin{equation} \label{eq:OOI}
(A^2x,x)(B^2x,x)-(A^2\ \sharp \ B^2 x,x)^2 \leq \frac{1}{4\gamma^2} \left( M_1M_2-m_1m_2\right)^2
\end{equation}
for every unit vector $x\in H$, where $\gamma = \max \{ \frac{m_1}{M_1}, \frac{m_2}{M_2} \}$.\par
Based on the Kantorovich inequality for the difference, we present an extension of Ozeki--Izumino--Mori--Seo inequality \eqref{eq:OOI} as follows.
\begin{theorem}
Suppose that $\Phi: \mathbb{B}(\mathscr{H}) \to \mathbb{B}(\mathscr{K})$ is a positive linear map such that $\Phi(I)$ is invertible and $\Phi(I)\leq I$. Assume that $A, B \in \mathbb{B}(\mathscr{H})$ are positive invertible operators such that $0<m_1\leq A\leq M_1$ and $0<m_2\leq B\leq M_2$ for some scalars $m_1\leq M_1$ and $m_2\leq M_2$. Then
\begin{equation} \label{eq:phi-1}
\Phi(B^2)^{\frac{1}{2}} \Phi(A^2)\Phi(B^2)^{\frac{1}{2}} - | \Phi(B^2)^{-\frac{1}{2}} \Phi(A^2\sharp B^2) \Phi(B^2)^{\frac{1}{2}} | ^2 \leq \frac{(M_1M_2-m_1m_2)^2}{4} \times \frac{M_2^2}{m_2^2}
\end{equation}
and
\begin{equation} \label{eq:phi-2}
\Phi(A^2)^{\frac{1}{2}} \Phi(B^2)\Phi(A^2)^{\frac{1}{2}} - | \Phi(A^2)^{-\frac{1}{2}} \Phi(A^2\sharp B^2) \Phi(A^2)^{\frac{1}{2}} |^2 \leq \frac{(M_1M_2-m_1m_2)^2}{4} \times \frac{M_1^2}{m_1^2}.
\end{equation}
\end{theorem}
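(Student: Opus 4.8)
The plan is to establish \eqref{eq:phi-1} in detail and then deduce \eqref{eq:phi-2} by interchanging the roles of $A$ and $B$ together with the pairs $(m_1,M_1)$ and $(m_2,M_2)$, the two statements being symmetric. Note first that $\Phi(B^2)$ is positive and invertible: from $B^2\geq m_2^2$ we get $\Phi(B^2)\geq m_2^2\Phi(I)$, and $\Phi(I)$ is invertible by hypothesis, so $\Phi(B^2)^{\pm 1/2}$ are well defined.

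I would begin by unwinding the second term on the left of \eqref{eq:phi-1}. Since $\Phi(A^2\sharp B^2)$ and $\Phi(B^2)^{\pm 1/2}$ are self-adjoint, one has $|\Phi(B^2)^{-1/2}\Phi(A^2\sharp B^2)\Phi(B^2)^{1/2}|^2=\Phi(B^2)^{1/2}\Phi(A^2\sharp B^2)\Phi(B^2)^{-1}\Phi(A^2\sharp B^2)\Phi(B^2)^{1/2}$, so the whole left-hand side equals $\Phi(B^2)^{1/2}K\Phi(B^2)^{1/2}$, where $K:=\Phi(A^2)-\Phi(A^2\sharp B^2)\Phi(B^2)^{-1}\Phi(A^2\sharp B^2)$.

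The crucial device is the \emph{unital} positive linear map $\Psi(X):=\Phi(B^2)^{-1/2}\Phi(BXB)\Phi(B^2)^{-1/2}$, which satisfies $\Psi(I)=I$. Putting $Y:=B^{-1}(A^2\sharp B^2)B^{-1}$ and using the mean identity $A^2\sharp B^2=B(B^{-1}A^2B^{-1})^{1/2}B$, I get $Y=(B^{-1}A^2B^{-1})^{1/2}$, hence $Y$ is self-adjoint with $Y^2=B^{-1}A^2B^{-1}$. A short computation then shows $\Psi(Y)=\Phi(B^2)^{-1/2}\Phi(A^2\sharp B^2)\Phi(B^2)^{-1/2}$ and $\Psi(Y^2)=\Phi(B^2)^{-1/2}\Phi(A^2)\Phi(B^2)^{-1/2}$, so that $K=\Phi(B^2)^{1/2}\bigl(\Psi(Y^2)-\Psi(Y)^2\bigr)\Phi(B^2)^{1/2}$ and the left-hand side of \eqref{eq:phi-1} becomes $\Phi(B^2)\bigl(\Psi(Y^2)-\Psi(Y)^2\bigr)\Phi(B^2)$. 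Applying the difference (variance) form of the Kantorovich inequality, namely Lemma \ref{lem1} with the unital map $\Psi$ and the self-adjoint $Y$, bounds $\Psi(Y^2)-\Psi(Y)^2\leq \tfrac14(M-m)^2$ whenever $m\leq Y\leq M$.

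It remains to locate the spectrum of $Y$ and to control $\Phi(B^2)$. From $m_1^2\leq A^2\leq M_1^2$ and $1/M_2^2\leq B^{-2}\leq 1/m_2^2$ one obtains $\tfrac{m_1^2}{M_2^2}\leq Y^2\leq \tfrac{M_1^2}{m_2^2}$, so $\tfrac{m_1}{M_2}\leq Y\leq \tfrac{M_1}{m_2}$ and $M-m=\tfrac{M_1M_2-m_1m_2}{m_2M_2}$. Taking the congruence by $\Phi(B^2)$ through the last displayed inequality gives a left-hand side at most $\tfrac{(M_1M_2-m_1m_2)^2}{4m_2^2M_2^2}\,\Phi(B^2)^2$; since $\Phi(B^2)\leq M_2^2\Phi(I)\leq M_2^2$ forces $\Phi(B^2)^2\leq M_2^4$, the bound $\tfrac{(M_1M_2-m_1m_2)^2}{4}\cdot\tfrac{M_2^2}{m_2^2}$ in \eqref{eq:phi-1} drops out, the use of the hypothesis $\Phi(I)\leq I$ being exactly what produces the factor $M_2^2/m_2^2$. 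I expect the main obstacle to be the algebra of the third paragraph: spotting that the non-unital $\Phi$ must be renormalized into the unital $\Psi$ by congruence with $\Phi(B^2)^{1/2}$, and checking via $A^2\sharp B^2=B(B^{-1}A^2B^{-1})^{1/2}B$ that the awkward term $|\Phi(B^2)^{-1/2}\Phi(A^2\sharp B^2)\Phi(B^2)^{1/2}|^2$ is precisely $\Psi(Y)^2$. Once this reduction is in hand, Lemma \ref{lem1} and the order bounds finish the proof routinely, and \eqref{eq:phi-2} follows by the symmetric choice $\Psi'(X)=\Phi(A^2)^{-1/2}\Phi(AXA)\Phi(A^2)^{-1/2}$.
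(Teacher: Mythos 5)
Your proposal is correct and follows essentially the same route as the paper's proof: the paper likewise renormalizes $\Phi$ into the unital map $\Psi(X)=\Phi(A)^{-\frac{1}{2}}\Phi(A^{\frac{1}{2}}XA^{\frac{1}{2}})\Phi(A)^{-\frac{1}{2}}$, applies the Kantorovich difference (variance) inequality to the square root $X=(A^{-\frac{1}{2}}BA^{-\frac{1}{2}})^{\frac{1}{2}}$ with the same spectral bounds, and then uses $\Phi(I)\leq I$ to bound $\Phi(\cdot)^2$ by a scalar. The only cosmetic differences are that you work directly with $A^2,B^2$ and normalize by $\Phi(B^2)$ (proving \eqref{eq:phi-1} first, with \eqref{eq:phi-2} by symmetry, the reverse of the paper's order), and you invoke Lemma \ref{lem1} for the variance bound where the paper re-derives it inline from $X^2\leq (M+m)X-Mm$.
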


\begin{proof}
Define a normalized positive linear map $\Psi$ by
\begin{equation*}
\Psi(X):=\Phi(A)^{-\frac{1}{2}}\Phi(A^{\frac{1}{2}}XA^{\frac{1}{2}})\Phi(A)^{-\frac{1}{2}}.
\end{equation*}
By using the Kantorovich inequality for the difference, it follows that
\begin{equation} \label{eq:2}
\Psi(X^2)-\Psi(X)^2 \leq \frac{(M-m)^2}{4}
\end{equation}
for  all $0<m\leq X\leq M$ with some scalars $m\leq M$. As a matter of fact, we have
\begin{align*}
\Psi(X^2)-\Psi(X)^2 & \leq \Psi((M+m)X-Mm)-\Psi(X)^2 \\
& = -\left( \Psi(X)-\frac{M+m}{2} \right)^2 + \frac{(M-m)^2}{4} \\
& \leq \frac{(M-m)^2}{4}.
\end{align*}

If we put $X=(A^{-\frac{1}{2}}BA^{-\frac{1}{2}})^{\frac{1}{2}}$, then due to
\[
0< (m=) \sqrt{\frac{m_2}{M_1}} \leq X \leq \sqrt{\frac{M_2}{m_1}} (=M)
\]
we deduce from \eqref{eq:2} that
\[
\Phi(A)^{-\frac{1}{2}}\Phi(B)\Phi(A)^{-\frac{1}{2}}-\left( \Phi(A)^{-\frac{1}{2}}\Phi(A\sharp B)\Phi(A)^{-\frac{1}{2}}\right)^2 \leq \frac{(\sqrt{M_1M_2}-\sqrt{m_1m_2})^2}{4M_1m_1}.
\]
Pre- and post-multiplying both sides by $\Phi(A)$, we obtain
\begin{align*}
\Phi(A)^{\frac{1}{2}}\Phi(B)\Phi(A)^{\frac{1}{2}}-| \Phi(A)^{-\frac{1}{2}}\Phi(A\sharp B)\Phi(A)^{\frac{1}{2}} |^2 & \leq \frac{(\sqrt{M_1M_2}-\sqrt{m_1m_2})^2}{4M_1m_1} \Phi(A)^2 \\
& \leq \frac{(\sqrt{M_1M_2}-\sqrt{m_1m_2})^2}{4} \times \frac{M_1}{m_1} ,
\end{align*}
since $0 \leq \Phi(A)^2 \leq M_1^2$. Replacing $A$ and $B$ by $A^2$ and $B^2$ respectively, we have the desired inequality \eqref{eq:phi-2}.  Similarly, one can obtain \eqref{eq:phi-1}.
  \end{proof}

\begin{remark}
If $\Phi$ is a vector state in \eqref{eq:phi-1} and \eqref{eq:phi-2}, then we get Ozeki--Izumino--Mori--Seo inequality \eqref{eq:OOI}.
\end{remark}
  \par
  \bigskip


\begin{thebibliography}{99}

\bibitem{A-B-M} Lj. Aramba\v{s}i\'c, D. Baki\'c and M.S. Moslehian, \textit{A treatment of the Cauchy--Schwarz inequality in $C^*$-modules}, J. Math. Anal. Appl. (to appear), arXiv:0905.3509v2.

\bibitem{BUL} P.S. Bullen, \textit{A dictionary of inequalities}, Pitman Monographs and
Surveys in Pure and Applied Mathematics, 97. Longman, Harlow, 1998.

\bibitem{CHO} M.-D. Choi, \textit{Some assorted inequalities for positive linear maps on $C^*$-algebras}, J. Operator Theory \textbf{4} (1980), no. 2, 271--285.

\bibitem{DM} J.B. Diaz and F.T. Metcalf, \textit{Stronger forms of a class of
inequalities of G. P\'{o}lya-G. Szeg\"{o} and L.V. Kantorovich},
Bull. Amer. Math. Soc. \textbf{69} (1963), 415--418.

\bibitem{DRA1} S.S. Dragomir, \textit{A survey on Cauchy--Bunyakovsky--Schwarz type discrete
inequalities},  J. Inequal. Pure Appl. Math. \textbf{4}
(2003), no. 3, Article 63, 142 pp.

\bibitem{DRA2} S.S. Dragomir, \textit{A counterpart of Schwarz inequality in inner product spaces}, RGMIA.
Res. Rep. Coll. \textbf{6} (2003), Article 18.

\bibitem{DRA3} S.S. Dragomir, \textit{Advances in Inequalities of the Schwarz, Triangle
and Heisenberg Type in Inner Product Spaces}, Nova Science
Publishers, Inc., New York, 2007.

\bibitem{E-L} D.E. Evans and J.T. Lewis,  \textit{Dilations of irreversible evolutions in algebraic quantum theory},
Comm. Dublin Inst. Adv. Studies. Ser. A, no. 24, 1977.

\bibitem{F-H-P-S} T. Furuta, J.M. Hot, J.E. Pe\v cari\'c and Y. Seo, \textit{Mond--Pe\v cari\'c method
in operator inequalities. Inequalities for bounded selfadjoint
operators on a Hilbert space}, Monographs in Inequalities 1. Zagreb:
Eelement, 2005.

\bibitem{G-R} W. Greub and W. Rheinboldt, \textit{On a generalisation of an
inequality of L.V. Kantorovich}, Proc. Amer. Math. Soc. \textbf{10} (1959), 407--415.

\bibitem{IV} D. Ili\v{s}evi\'c and S. Varo\v{s}anec, \textit{On the Cauchy-Schwarz inequality and its reverse in semi-inner product $C\sp \ast$-modules}, Banach J. Math. Anal. \textbf{1} (2007), no. 1, 78--84.

\bibitem{IMS} S. Izumino, H. Mori and Y. Seo, \textit{On Ozeki's inequality}, J. Inequal. Appl. \textbf{2} (1998), 235--253.

\bibitem{JOI} M. Joi\c ta, \textit{On the Cauchy--Schwarz inequality in $C^*$-algebras}, Math. Rep. (Bucur.)
\textbf{3(53)} (2001), no. 3, 243--246.

\bibitem{K-M} M.S. Klamkin and R.G. Mclenaghan, \textit{An ellipse inequality},
Math. Mag. \textbf{50} (1977), 261--263.

\bibitem{LAN} E.C. Lance, \textit{Hilbert $C^*$-Modules}, London Math. Soc. Lecture Note Series 210, Cambridge Univ. Press, 1995.

\bibitem{LEE} E.-Y. Lee, \textit{A matrix reverse Cauchy-Schwarz inequality}, Linear Algebra Appl. \textbf{430} (2009), no. 2-3, 805--810.

\bibitem{MP} M.S. Moslehian and L.-E. Persson, \textit{Reverse Cauchy-Schwarz inequalities for positive $C\sp \ast$-valued sesquilinear forms}, Math. Inequal. Appl. \textbf{12} (2009), no. 4, 701--709.

\bibitem{m-r} M.S. Moslehian and R. Raji\'c, \textit{Gr\"uss inequality for $n$-positive linear maps}, Linear Alg. Appl. \textbf{433} (2010), no. 8-10, 1555--1560.

\bibitem {MUR} G.J. Murphy, $C^*$-algebras and Operator Theory, Academic Press, Boston, 1990.

\bibitem{NIC} C.P. Niculescu, \textit{Converses of the Cauchy--Schwarz inequality in the $C^*$-framework}, An. Univ. Craiova Ser. Mat. Inform. \textbf{26} (1999), 22--28.

\bibitem{NIE} M. Niezgoda, \textit{Accretive operators and Cassels inequality}, Linear Algebra Appl. \textbf{433} (2009), no. 1, 136--142.

\bibitem{Ozeki} N. Ozeki, \textit{On the estimation of the inequalities by the maximum, or minimum values(in Japanese)}, J.College Arts Ci. Chiba Univ., \textbf{5} (1968), 199--203.

\bibitem{PS} G. P\'{o}lya and G. Szeg\"{o}, \textit{Aufgaben und Lehrs\"{a}tze aus der Analysis}, Vol. 1, Berlin 1925, pp. 57 and 213--214.

\bibitem{SM} O. Shisha and B. Mond, \textit{Bounds on differences of means}, Inequalities I, New York-London, 1967, 293--308.

\bibitem{WAT} G.S. Watson, \textit{Serial correlation in regression analysis I},
Biometrika \textbf{42} (1955), 327--342.

\end{thebibliography}
\end{document}